\theoremstyle{definition}
\numberwithin{equation}{section}
\newcommand\vanish[1]{}	
\newcommand\ourcomment[1]{ \textbf{[#1]} }
\newcommand\oc\ourcomment
\newtheorem{theorem}{Theorem}[section]
\newtheorem{conjecture}{Conjecture}[section]
\newtheorem{problem}{Problem}[section]
\newtheorem{proposition}{Proposition}[section]
\title{Cop number of $2K_2$-free graphs}
\author{Vaidy Sivaraman \\ Stephen Testa}
\begin{document}
\maketitle
\begin{abstract}
We prove that the cop number of a $2K_2$-free graph is at most $2$ if it has diameter $3$ or does not have an induced cycle of length $k$, where $k \ \in \{3,4,5\}$. We conjecture that the cop number of every $2K_2$-free graph is at most $2$.
\end{abstract}

\maketitle

 The game of cops and robbers is played on a finite, simple, and connected graph $G$. There are $k$ cops and a single robber. Each of the cops chooses a vertex to start, and then the robber chooses a vertex. And then they alternate turns starting with the cop. In the turn of cops, each cop either stays on the vertex or moves to a neighboring vertex. In the robber's turn, he stays on the same vertex or moves to a neighboring vertex. Each move is seen by both players. The cops win if at any point in time, one of the cops lands on the robber. The robber wins if he can avoid being captured. The cop number of $G$, denoted $c(G)$, is the minimum number of cops needed so that the cops have a winning strategy in $G$.  The question of what makes a graph to have high cop number is not clearly understood. Some fundamental results were proved in \cite{AF} and \cite{NW}. The book by Bonato and Nowakowski \cite{BN} is a fantastic source of information on the game of cops and robbers. (A quick primer on the cop number is \cite{AB}.) \\

All graphs in this article are finite, simple, and connected. For graphs $H,G$ we say that $G$ is $H$-free if $G$ does not contain $H$ as an induced subgraph. A stable set in a graph is a set of pairwise non-adjacent vertices. Let $A,B$ be disjoint vertex sets in $G$. We say that $A$ is complete to $B$ if every vertex in $A$ is adjacent to every vertex in $B$, and that $A$ is anticomplete to $B$ if every vertex in $A$ is non-adjacent to every vertex in $B$. For a positive integer $t$, $P_t$ will denote the path graph on $t$ vertices. A $k$-cycle is a cycle with $k$ vertices (or edges). We denote the complement of the $4$-cycle by $2K_2$. The class of $2K_2$-free graphs has been extensively studied but still no structure theorem is known. The first result proved about ``$\chi$-boundedness" (introduced by Gy\'{a}rf\'{a}s \cite{AG}) is for the class of $2K_2$-free graphs \cite{SW}. \\ 

In this article we are interested in the cop number of $2K_2$-free graphs. We start with a simple proposition.

\begin{proposition}\label{EASYBOUND}
 Let $G$ be a $2K_2$-free graph. Then $c(G) \leq 3$. 
\end{proposition}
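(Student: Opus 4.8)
The plan is to exploit the single structural feature that $2K_2$-freeness forces around any edge, and then let three cops use it directly. Fix any edge $uv$ (it exists because $G$ is connected with at least two vertices; the one-vertex case is trivial). Set $S = V(G) \setminus (N[u] \cup N[v])$, the vertices that are distinct from, and nonadjacent to, both $u$ and $v$. First I would prove two claims about $S$. Claim (i): $S$ is a stable set, since if $w_1 w_2$ were an edge with $w_1, w_2 \in S$, then $\{u,v\}$ would be anticomplete to $\{w_1, w_2\}$ (and all four vertices are distinct), so $uv$ and $w_1 w_2$ would induce a $2K_2$, a contradiction. Claim (ii): every neighbor of a vertex of $S$ lies in $N(u) \cup N(v)$; indeed, if $w \in S$ and $wz$ is an edge, then $u,v,w,z$ are distinct and $w$ is nonadjacent to both $u$ and $v$, so to avoid an induced $2K_2$ on $uv$ and $wz$ one of $uz, vz$ must be an edge, forcing $z \in N(u)\cup N(v)$.

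Next I would describe the three-cop strategy. In a setup phase, move cops $C_1$ and $C_2$ along shortest paths until $C_1$ sits on $u$ and $C_2$ on $v$; the robber cannot prevent this, so it happens after finitely many rounds, after which $C_1$ and $C_2$ never leave $u$ and $v$. I would then observe that the set of vertices on which the robber is safe (not capturable by either stationary cop on the cops' next move) is exactly $S$: every vertex of $(N[u]\cup N[v]) \setminus \{u,v\}$ is adjacent to $u$ or to $v$ and would be captured at once. Hence on each of his turns the robber must either already be in $S$ or be able to step into $S$.

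Now I would run the endgame. By Claim (ii), from a vertex $w \in S$ the robber's only moves are to stay at $w$ or to step to a vertex of $N(u)\cup N(v)$; but any such vertex is adjacent to $u$ or $v$ and is captured by $C_1$ or $C_2$ on the following move. So a robber who wishes to survive is pinned to a single vertex $w \in S$. Since $w$ is then fixed, the third cop $C_3$ follows a shortest path to $w$, strictly decreasing its distance each round, and captures the effectively stationary robber; and if at any round the robber instead leaves $S$, he is captured immediately by $C_1$ or $C_2$. Either way three cops win, so $c(G) \le 3$.

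The routine part is the two structural claims, which are immediate from the definition of $2K_2$-free. The step that needs the most care is the game-theoretic bookkeeping: making the turn order explicit so that ``leaving $S$ is fatal'' really is a capture on the cops' very next move rather than one move too late, and handling the transition out of the setup phase, when the robber may start inside $N[u]\cup N[v]$ — there I must check that if he cannot reach $S$ in one step he is caught at once, and otherwise he enters $S$ and the endgame applies. I expect this careful move-order argument, rather than any structural difficulty, to be the main obstacle.
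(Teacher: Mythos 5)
Your proof is correct and follows essentially the same strategy as the paper: two stationary cops on an edge $uv$ make the complement of $N[u]\cup N[v]$ a stable set whose vertices have all their neighbors dominated, pinning the robber so the third cop can walk to him. The only difference is your setup phase, which is unnecessary since the cops choose their starting positions before the robber does, so they may simply begin on $u$ and $v$.
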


\begin{proof}
Here is a winning strategy for 3 cops. Place two stationary cops at two adjacent vertices, say $u$, $v$, and the third cop on $u$. Let $S$ be the set of vertices that is neither a neighbor of $u$ nor a neighbor of $v$. Since $G$ is $2K_2$-free, $S$ is a stable set. In his first move the robber will choose a vertex in $S$, say $w$, but he cannot move from $w$ (since the cops in $u$ and $v$ are not moving). The third cop just walks to $w$, by taking a shortest path from $u$ to $w$, to capture the robber. 
\end{proof}

In the previous simple argument it seems that we are using too may cops. Can we save a cop? We state this as a conjecture. 

 \begin{conjecture}
 Let $G$ be a $2K_2$-free graph. Then $c(G) \leq 2$.
 \end{conjecture}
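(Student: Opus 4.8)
The first thing I would record is a structural restriction that the hypothesis forces for free: a connected $2K_2$-free graph has diameter at most $3$. Indeed, a shortest path is induced, and a shortest path on five vertices is an induced $P_5$, whose two terminal edges induce a $2K_2$; so no pair of vertices is at distance $4$. Thus we only ever need to beat the robber on a graph of diameter $2$ or $3$. The diameter-$3$ case is the one announced as a theorem in the abstract, so the remaining content of the conjecture concentrates entirely on the diameter-$\le 2$ regime, which is the dense case and, somewhat counterintuitively, the harder one for two cops.

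Before playing, I would also prune the graph. If $v$ is a \emph{corner}, i.e.\ every closed neighbour of $v$ is also a neighbour of some fixed $u\neq v$, then $G$ retracts onto $G-v$ and one checks $c(G)=c(G-v)$; since $G-v$ is again $2K_2$-free, we may assume throughout that $G$ has no dominated vertex. The plan is then to run a refinement of the three-cop argument of Proposition~\ref{EASYBOUND}. Fix an edge $uv$ and recall that the set $S$ of vertices nonadjacent to both $u$ and $v$ is stable. Put $A=N(u)\setminus(N(v)\cup\{v\})$, $B=N(v)\setminus(N(u)\cup\{u\})$ and $C=N(u)\cap N(v)$; every vertex of $S$ has all of its neighbours inside $A\cup B\cup C$. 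Placing the two cops at $u$ and $v$ forces the robber onto some $w\in S$, from which he cannot move while both $u,v$ are occupied. The difficulty is that a single cop must now leave its post to walk to $w$, and the instant it does the robber may escape into the newly unguarded side ($A$ if the cop at $u$ leaves, $B$ if the cop at $v$ leaves).

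To defeat this escape I would replace the two static cops by two cops that each \emph{guard a shortest path}, in the spirit of Aigner--Fromme: once a cop occupies the projection of the robber onto a fixed isometric path $P$, it can forever prevent the robber from entering $P$ and can track his projection along $P$. Since the diameter is at most $3$, every such $P$ has at most four vertices, so guarding is cheap. I would use the first cop to guard an isometric path through the controlled edge $uv$, confining the robber to one side, and the second cop to guard a further isometric path inside that side, so that after each phase the robber's reachable region is a strictly smaller induced subgraph. The target is a monotone potential---the number of vertices the robber can still reach, or his distance to the controlled edge---that strictly decreases each phase and forces capture in finitely many rounds. The corner-free reduction is what should make the decrease strict: in a graph with no dominated vertex the robber has no vertex whose closed neighbourhood sits inside a cop's, hence cannot stall by mirroring a cop forever.

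The step I expect to be the genuine obstacle is controlling the robber's oscillation between $S$ and $A\cup B\cup C$ with only two cops in play: guaranteeing that guarding two short paths really traps him, rather than merely herding him around an induced $4$- or $5$-cycle, seems to need more than the local fact that the nonneighbours of an edge are stable. Lacking a structure theorem for $2K_2$-free graphs, I would first combine the special cases already in hand---note that only $C_3,C_4,C_5$ can occur as induced cycles, since any longer induced cycle again contains a $2K_2$---so that the $C_3$-, $C_4$- and $C_5$-free subcases are dispatched outright. This reduces the conjecture to the single case of a diameter-$2$ graph containing induced copies of all three of $C_3$, $C_4$ and $C_5$ simultaneously, and it is there that I expect the real work, and the need for genuinely new structural input, to lie.
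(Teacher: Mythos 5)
The statement you are attacking is presented in the paper as an open \emph{conjecture}; the paper proves it only in two special cases (diameter $3$, and no induced $C_k$ for some $k \in \{3,4,5\}$), so there is no full proof to compare yours against --- and your proposal, by your own admission, does not supply one either. The preliminary reductions you make are correct and genuinely useful: a connected $2K_2$-free graph does have diameter at most $3$ (the two end-edges of an induced $P_5$ form a $2K_2$, matching the paper's observation that $L_4 = \emptyset$); deleting a dominated vertex preserves both $2K_2$-freeness and the cop number (the retract argument); and combining the paper's two theorems does leave exactly the case of a diameter-at-most-$2$ graph containing induced copies of all of $C_3$, $C_4$, $C_5$. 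This is an accurate map of where the difficulty lives, and arguably a sharper framing than the paper's own remark that their method could not be adjusted to diameter $2$.

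However, the core of the argument is absent, and the sketch you give of it would not work as stated. The Aigner--Fromme device you invoke --- one cop guards an isometric path, a second cop guards a further path inside the robber's territory, and the territory strictly shrinks --- is precisely the three-cop planar argument, and it inherently needs a third cop: while two cops are committed to guarding their paths, a \emph{free} cop is required to establish the next guarded path, and the moment one of your two cops abandons its path to begin a new phase, the robber's territory can grow back. Nothing in $2K_2$-freeness or in the corner-free reduction, as you have used them, rules this out; your proposed potential (number of reachable vertices, or distance to the controlled edge) is never shown to decrease. The ``genuine obstacle'' you name --- that the robber may oscillate between the stable set $S$ and $N(u)\cup N(v)$, or be herded forever around an induced $4$- or $5$-cycle --- is not a technical step to be checked later; it is exactly the unproved heart of the conjecture. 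So what you have is a sound reduction of the problem plus an honest research plan, but the gap in it coincides with the open problem itself.
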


In this article we prove two partial results towards this conjecture. The first one deals with graphs with diameter $3$. Recall that the diameter of a graph is the maximum length of a shortest path between two vertices.

  \begin{theorem}
  Let $G$ be a $2K_2$-free graph with diameter $3$. Then $c(G) \leq 2$. 
  \end{theorem}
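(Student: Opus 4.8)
\medskip
\noindent\textbf{Proof proposal.}
My plan is to refine the three--cop argument of Proposition~\ref{EASYBOUND} so that a single cop does the ``freezing'' and the second cop does the capturing. Suppose we can find a vertex $v$ whose non-neighbourhood $I:=V\setminus N[v]$ is a \emph{stable set}. Then two cops win as follows. Send the first cop $C_1$ to $v$ and keep it there. The robber must avoid $N[v]$, for otherwise $C_1$ captures it on the next move; hence the robber lives in $I$. But every neighbour of a vertex of $I$ lies in $N(v)$ (a neighbour inside $I$ would violate stability), so any move the robber makes lands it in $N(v)$, where $C_1$ immediately captures it. Thus the robber is frozen at a single vertex of $I$, and the second cop $C_2$ simply walks a shortest path to it. So the whole theorem reduces to the following structural statement, which is where the diameter hypothesis must be used: \emph{every $2K_2$-free graph of diameter $3$ has a vertex $v$ with $V\setminus N[v]$ stable} (equivalently, $N[v]$ is a vertex cover).

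To attack this lemma I would fix a diametral pair $p,q$ with $d(p,q)=3$ and layer the graph around $p$ as $N_1=N(p)$, $N_2$, $N_3$ (the vertices at distance $1,2,3$), so that $q\in N_3$. Three facts come for free from $2K_2$-freeness together with the layering. First, $N_1$ is anticomplete to $N_3$ (a neighbour of $p$ adjacent to a distance-$3$ vertex would put that vertex in $N_2$). Second, $N_3$ is stable: if $c,c'\in N_3$ were adjacent then, for any $a\in N_1$, the edges $pa$ and $cc'$ would induce a $2K_2$, since $p$ and $a$ are both non-adjacent to $c$ and $c'$. Third, every $b\in N_2$ having a neighbour in $N_3$ is complete to $N_1$: for such a $b$ with neighbour $c\in N_3$ and any $a\in N_1$, the edges $pa$ and $bc$ would induce a $2K_2$ unless $a$ is adjacent to $b$. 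These three facts show that, for a candidate centre $v\in N_1$, all edges of $G$ except those lying inside $G[N_1\cup N_2]$ are automatically covered by $N[v]$: edges incident with $p$ because $v\sim p$; edges meeting $N_3$ because their $N_2$-endpoint is complete to $N_1$; and there are no edges inside $N_3$ or between $N_1$ and $N_3$.

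Hence the problem is reduced to finding $v\in N_1$ whose closed neighbourhood covers the edges inside the smaller graph $G[N_1\cup N_2]$, and \emph{this is the step I expect to be the main obstacle}. A careless choice of centre really can fail: if $u$ is an endpoint of a diameter then $V\setminus N[u]$ typically contains edges, and one can even build a $2K_2$-free graph of diameter $3$ in which $V\setminus N[u]$ induces a $C_4$ (which by itself needs two cops), so one cannot hope that an arbitrary vertex works. Moreover the lemma is genuinely false without the diameter hypothesis --- in $C_5$ every vertex has two adjacent non-neighbours --- so diameter $3$ must be used precisely in this last step. I would try to locate the centre by an extremal choice, for instance a vertex of $N_1$ maximising its degree into $N_1\cup N_2$, and argue that any uncovered edge $xy$ forces $N(v)\subseteq N(x)\cup N(y)$ and hence a vertex of strictly larger closed neighbourhood, contradicting maximality; the delicate point is to make such an exchange argument go through in the presence of arbitrary adjacencies inside $N_2$.

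Finally, as a safeguard in case a fully stable non-neighbourhood cannot always be guaranteed, the same two-cop scheme works under the weaker requirement that $V\setminus N[v]$ merely induce a \emph{cop-win} (dismantlable) graph: the cop at $v$ still confines the robber to $G[V\setminus N[v]]$, and the second cop, which may travel freely through $N[v]$, then captures it there. I would fall back on proving this weaker structural statement if the stable version turns out to be too strong.
\medskip
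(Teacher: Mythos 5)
Your reduction is sound as far as it goes: if some vertex $v$ of a connected graph has $V\setminus N[v]$ stable, then two cops win by parking one cop at $v$ (which both forbids the robber from entering $N[v]$ and freezes him, since all neighbours of his vertex lie in $N(v)$) and walking the other cop to him. The problem is that the structural lemma you reduce everything to --- \emph{every $2K_2$-free graph of diameter $3$ has a vertex $v$ with $V\setminus N[v]$ stable} --- is false, so no choice of centre, extremal or otherwise, can complete the argument. Here is a counterexample. Take layers $L_0=\{p\}$, $L_1=\{a_1,\dots,a_5\}$, $L_2=\{b_1,\dots,b_5\}$, $L_3=\{c\}$, where $L_1$ and $L_2$ each induce a $5$-cycle, consecutive layers are complete to each other, and there are no other edges. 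This graph has diameter $3$ (the unique pair at distance $3$ is $p,c$). It is $2K_2$-free: every edge meets $L_1$ or $L_2$; if one edge of a would-be induced $2K_2$ meets $L_1$ and the other meets $L_2$, those endpoints are adjacent across the complete join; if both edges meet $L_1$ and neither meets $L_2$, all four vertices lie in $L_0\cup L_1$, which induces a wheel (hub $p$ plus a $C_5$), easily checked to be $2K_2$-free; symmetrically when both edges meet $L_2$ only. Yet no vertex has a stable non-neighbourhood: for $p$, the set $V\setminus N[p]=L_2\cup L_3$ induces a wheel; for $a_i$ it contains the opposite edge $a_{i+2}a_{i+3}$ of the $5$-cycle on $L_1$ (indices mod $5$); for $b_i$ it contains $b_{i+2}b_{i+3}$; for $c$ it contains $pa_1$. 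In particular your planned extremal choice inside $N_1$ cannot succeed: here every vertex of $L_1$ leaves an edge of $L_1$ uncovered, and no vertex elsewhere does better.

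Your fallback (find $v$ with $G[V\setminus N[v]]$ cop-win) does survive this example --- $V\setminus N[a_1]$ induces $K_2+K_1$ --- but you give no proof of that weaker lemma, and it is not obviously easier than the theorem itself; as written, the proposal establishes only the (correct) reduction and nothing more. It is worth seeing how the paper sidesteps exactly this obstruction: its trap is adaptive and uses the neighbourhoods of \emph{two} cops rather than one. The cops start on the middle vertices $v_1,v_2$ of a geodesic $v_0v_1v_2v_3$; $2K_2$-freeness forces the robber onto a vertex $z\in A_2$ (distance $2$ from $v_0$, no neighbour at distance $3$, not adjacent to $v_1$); the cops then move to $v_2$ and to a neighbour $y$ of $z$ in $L_1$, and four short claims show $N(z)\subseteq N(v_2)\cup N(y)$, so the robber is caught. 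The trapping pair $(y,v_2)$ depends on the robber's choice of $z$, which is precisely what makes the paper's argument immune to the counterexample above: in that graph, with $z=b_{i+2}$ say, the pair consisting of $v_2$ and a suitable $a_j$ does dominate $N(z)$ even though no single vertex dominates its own non-neighbourhood.
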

  
  \begin{proof}
Let $v_0$, $v_3$ be vertices such that the distance between them is $3$, and let $v_0-v_1-v_2-v_3$ be a shortest $v_0,v_3$-path. Let $L_i$ denote the set of vertices in $G$ at distance $i$ from $v_0$. Let $B = \{v \in L_2 : v \text{ has a neighbor in } L_3\}$. 
 
 Note that $v_2 \in B$, in particular, $B$ is non-empty. \\
 
 Let $A = L_2 \setminus B$. Let $A_1 = \{v \in A: v \text{ is adjacent to } v_1\}$. Let $A_2 = A \setminus A_1$. Note that $L_4 = \emptyset$ because $G$ is $2K_2$-free. \\
 
 We claim the following. \\

 (1) $L_3$ is stable. \\
 
 For if not, let $a,b \in L_3$ be adjacent vertices. Then $\{a,b,v_0,v_1\}$ induces a $2K_2$, a contradiction.  This proves (1). \\
 
 (2) $B$ is complete to $L_1$. \\
 
 Suppose $b \in B$ is non-adjacent to $b' \in L_1$. Let $b'' \in L_3$ be a neighbor of $b$. Then $\{v_0, b', b, b''\}$ induces a $2K_2$, a contradiction. This proves (2).  \\
 
 (3) $A_2$ is stable. \\ 
 
 Suppose $a_2', a_2'' \in A_2$ be adjacent. Then $\{v_0, v_1, a_2', a_2''\}$ induces a $2K_2$, a contradiction. This proves (3). \\
 
 (4) For any two adjacent vertices $a,b \in A$, at least one of them is adjacent to $v_2$. \\
 
 Suppose neither $a$ nor $b$ is adjacent to $v_2$. Then $\{a,b, v_2, v_3\}$ induces a $2K_2$, a contradiction. This proves (4). \\
 
 We give a winning strategy with 2 cops. We will assume that the robber never places himself in a vertex adjacent to a cop vertex, and that he surrenders when that is not possible. Place one cop at $v_1$ and the other at $v_2$. We claim the following: \\
 
(5) The robber moves to a vertex in $A_2$. \\
 
The robber does not move to $v_0$, $A_1$, $B$ because they are in the neighborhood of $v_1$.  The robber does not move to a vertex $L_1$ because it is in the neighborhood of $v_2$. The robber does not move into $L_3$, because if he does, the cop at $v_1$ stays to block the robber' escape, and the other cop just walks to the robber and catches him. (Note that we are using (2) again here.) This proves (5). \\
 
 Let $z \in A_2$ be the vertex that the robber chooses. This implies that $z$ is non-adjacent to $v_2$. Let $y$ be a neighbor of $z$ in $L_1$. Now, the cop at $v_2$ moves to $y$, and the cop at $v_1$ moves to $v_2$. Now it is the robber's turn, but \\
 
 (6) The robber can only move to a vertex in the neighborhood of either $v_2$ or $y$, and hence surrenders. \\
 
 Since $z$ is a neighbor of $y$, the robber has to move. Since $v_2 \in B$, by using (2) we may assume he does not move to $L_1$ for $L_1$ is in the neighborhood of $v_2$. The robber cannot move to a vertex in $A_2$ since $A_2$ is stable. Since no vertex in $A_2$ is adjacent to a vertex in $L_3$, the robber cannot move to $L_3$. He will not move to a vertex in $B$, $B$ is complete to $y$. Let $w \in A_1$ be a neighbor of $z$. Since $z$ is non-adjacent to $v_2$, by (4) $w$ must be adjacent to $v_2$. Every neighbor of $z$ in $A_1$ is also a neighbor of $v_2$. This proves (6). \\
 
Thus $2$ cops have a winning strategy in $G$, completing the proof of the theorem. 
 \end{proof}

 We remark that we have used the hypothesis that $G$ has diameter $3$ to infer that $L_3$ (as we have defined in the proof) is non-empty, and hence, so is $B$. This was crucial in the proof. We couldn't find a way to adjust the proof so as to work for graphs with diameter $2$. Incidentally, there is a paper \cite{ZAW} on the cop number of diameter $2$ graphs, but the focus there is on Meyniel's conjecture, which states that the maximum cop number of an $n$-vertex graph is of the order of $\sqrt{n}$ (see \cite{BN}).\\
 
 It is tempting to compare the properties ``$\chi$-bounded" and ``bounded cop number". On the positive side, there are graphs with arbitrarily large girth and arbitrarily large cop number (see \cite{BN}). On the negative side, line graphs (a subclass of claw-free graphs) have unbounded cop number (see \cite{JKT}), while they are $\chi$-bounded by a quadratic $\chi$-bounding function. \\
  
Our next result is when the graph under consideration does not have an induced cycle of length $k$, where $k \in \{3,4,5\}$. Note that a $2K_2$-free graph cannot contain an induced $k$-cycle for $k \geq 6$.

\begin{theorem}
Let $G$ be a $2K_2$-free graph. If $G$ has no induced $k$-cycle for some $k \in \{3,4,5\}$, then $c(G) \leq 2$. 
\end{theorem}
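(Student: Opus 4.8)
The plan is to reduce to the diameter-$2$ case and then exploit the forbidden cycle through an explicit $2$-cop strategy. First I would record two standard facts: a $2K_2$-free graph has diameter at most $3$ (a shortest path on five vertices $x_0\c x_4$ would make $\{x_0,x_1,x_3,x_4\}$ an induced $2K_2$), and, as already noted, it has no induced cycle of length at least $6$. Thus a graph $G$ as in the statement has diameter $1$, $2$, or $3$. If the diameter is $1$ then $G$ is complete and $c(G)=1$; if the diameter is $3$ then the previous theorem already gives $c(G)\le 2$. So the whole content is the diameter-$2$ case, which is exactly the situation the remark after the previous theorem flags as the obstruction; here the extra hypothesis of a forbidden short cycle is what makes it tractable.

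For diameter $2$ I would fix any edge $uv$ and start the two cops on $u$ and $v$. As in the proof of Proposition~\ref{EASYBOUND}, the set $S$ of vertices adjacent to neither $u$ nor $v$ is stable, and it is the only place the robber can safely sit; moreover every neighbor of a vertex $s\in S$ lies in $N(u)\cup N(v)$, since such a neighbor is not $u$ or $v$ and, if it were adjacent to neither, it would lie in $S$ and contradict stability. Because the diameter is $2$, such an $s$ has a neighbor $y\in N(u)\setminus\{v\}$. The key maneuver is: keep the cop on $v$ and slide the cop on $u$ to $y$, so that $y$ is now adjacent to the robber at $s$.

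The cleanest case is $k=4$. Suppose the robber, threatened by the cop at $y$, escapes to a neighbor $s'$ of $s$ that avoids $N[y]\cup N[v]$. Because $S$ is stable, $s'\notin S$, so $s'$ is adjacent to $u$ or to $v$; as $s'$ avoids $v$ it must be adjacent to $u$. But then $u\d y\d s\d s'\d u$ is an induced $4$-cycle (the non-edges $us$ and $ys'$ hold by assumption), contradicting $C_4$-freeness. Hence every escape vertex lies in $N[y]\cup N[v]$ and the two cops capture on the next move. This settles $k=4$.

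For $k=5$ I would run the same shepherding one step further: after the first flight the vertices $u,y,s,s'$ form a (now permitted) induced $4$-cycle, and a second cop maneuver along this cycle forces any further escape to close up an induced $5$-cycle, contradicting $C_5$-freeness; alternatively one can use that a $(2K_2,C_5)$-free graph is $(P_5,C_5)$-free and hence has a dominating clique, and argue directly from that. The genuinely different and hardest case is $k=3$: triangle-freeness can never be violated by a flight, so the cycle-forcing trick fails, and indeed such graphs may have domination number $3$ (for instance the blow-up of $C_5$ by independent sets), so no counting bound suffices. Here I would instead determine the structure of triangle-free $2K_2$-free graphs of diameter $2$ --- I expect them to be independent-set blow-ups of complete bipartite graphs or of $C_5$ --- and win either by the trivial bound $c\le\gamma\le 2$ in the complete bipartite case or by having the two cops shadow a fixed induced $C_5$ (onto which the graph retracts, and whose cop number is $2$) in the remaining case. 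Making this structural dichotomy precise, and turning the $C_5$-shadowing into a capture despite the cops only reaching the robber's part rather than the robber itself, is where I expect the real work to lie.
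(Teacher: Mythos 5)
Your reduction to the diameter-$2$ case is sound (the diameter bound for $2K_2$-free graphs and the appeal to the previous theorem are both correct), and your $k=4$ argument is complete and correct. It is also genuinely different from the paper's: the paper invokes the structure theorem of \cite{BHPT} for $(2K_2,C_4)$-free graphs, whereas your two-move strategy plus the forced induced $C_4$ is elementary and self-contained. The other two cases, however, are not unwritten details but real gaps.

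For $k=5$, the assertion that ``a second cop maneuver along this cycle forces any further escape to close up an induced $5$-cycle'' fails under its natural reading. After the first flight the cops sit at $y$ and $v$ and the robber at $s'$, which is adjacent to $u$ and $s$ and to neither $y$ nor $v$. The cop at $y$ can threaten the robber by moving within the cycle only to $u$ or to $s$. Suppose he moves to $u$; the robber then flees to some $s''$ adjacent to $s'$ but to neither $u$ nor $v$, so $s''\in S$ and, by stability of $S$, $s''$ is non-adjacent to $s$. Among $\{u,y,s,s',s''\}$ the only undetermined pair is now $ys''$, and neither choice yields an induced $C_5$: if $ys''$ is an edge one gets only induced $C_4$'s, and if it is a non-edge one gets a $C_4$ with a pendant edge. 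Moving the cop to $s$ instead fails similarly, because the chord $us'$ kills every candidate $5$-cycle. So the robber escapes and the position has not improved --- your strategy has no progress measure. What actually works, and is what the paper does, is an asymmetric maneuver: choose the robber's neighbor $y$ to be adjacent to exactly one of $u,v$, say to $v$ and not $u$; move the cop at $v$ to $y$ and simultaneously the cop at $u$ to $v$. Then $C_5$-freeness forces every neighbor of the robber seeing only $u$ to see $y$ as well (otherwise $u$-$a$-$z$-$y$-$v$-$u$ is an induced $C_5$), while the cop now at $v$ covers all neighbors seeing $v$; if the robber has no neighbor adjacent to exactly one of $u,v$, one stationary cop pins him while the other walks over. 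Note this argument needs no diameter reduction at all. Your fallback is also gapped: $(P_5,C_5)$-free graphs do have dominating cliques, but a dominating clique of size at least $3$ gives no immediate bound of $2$ on the cop number; only a dominating vertex or edge does.

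For $k=3$ you explicitly defer the real work, and that work is precisely what is missing. The structure you ``expect'' is essentially right, but it must be proved or cited: the paper quotes Theorem 2 of \cite{TK}, that every $(2K_2,C_3)$-free graph containing an induced $C_5$ is a blow-up of $C_5$, and disposes of the $C_5$-free subcase by the $k=5$ argument --- which, once that case is done properly, also makes your complete-bipartite branch unnecessary. As written, two of your three cases rest on unproved assertions.
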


\begin{proof}

Suppose that $G$ has no induced $4$-cycle. The main result in \cite{BHPT} is a structure theorem for $(2K_2, C_4)$-free graphs. It states that $V(G)$ is the union of $A,B,C$ where $A$ is either empty or induces a $5$-cycle, $B$ induces a clique and $C$ induces an empty graph (graph with no edges), and $A$ is complete to $B$ and $A$ is anticomplete to $C$. Just placing a cop on a $B$-vertex  gives a winning strategy when $B$ is non-empty. If $B$ is empty, then so is $C$ (because $G$ is connected), and hence $G = C_5$. (Note that $c(C_5) = 2$.) Hence $c(G) \leq 2$.\\

Now suppose that $G$ has no induced $5$-cycle. Let $uv$ be an edge in $G$. Let $A$ be the set of vertices that are neighbors of $u$ but not $v$. Let $B$ be the set of vertices that are neighbors of $v$ but not $u$. Let $C$ be the set of vertices that are neighbors of both $u$ and $v$. Let $D$ be the set of vertices that are neither neighbors of $u$ nor $v$. Since $G$ is $2K_2$-free, $D$ is a stable set. Here is a winning strategy for 2 cops. Place the two cops on $u,v$. The robber will go to a vertex $z \in D$, for otherwise, he will captured in the next move. Suppose that $z$ has a neighbor in $A \cup B$, without loss of generality say $z$ is adjacent to $y \in B$. The cop at $v$ moves to a vertex $y$. The cop at $u$ moves to $v$. Since $G$ is $C_5$-free, any vertex in $A$ that is adjacent to $z$ is also adjacent to $y$. The robber has no move, and is captured in the next move. Suppose that $z$ has no neighbor in $A \cup B$. The cop at $u$ remains stationary, and the cop at $v$ just moves and catches the robber who is unable to move from $z$. Hence $c(G) \leq 2$. \\

Now suppose that $G$ is triangle-free. If $G$ has no induced $5$-cycle, we are done by the previous paragraph. Hence we may assume that $G$ contains an induced $5$-cycle. It is known that every $(2K_2, C_3)$-free graph containing $C_5$ is a blow-up of $C_5$, meaning every vertex has become a stable set (see Theorem $2$ in \cite{TK}). The cop number of a blow-up of $C_5$ is $2$, as the reader can easily check. We conclude that $c(G) \leq 2$. 
\end{proof}
 
 The following stronger conjecture was posed in \cite{VS}. It is stronger because the class of $2K_2$-free graphs is a proper subclass of the class of $P_5$-free graphs. 

  \begin{conjecture}
Let $G$ be a $P_5$-free graph. Then $c(G) \leq 2$.
  \end{conjecture}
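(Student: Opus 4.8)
The plan is to combine a domination-structure theorem for $P_5$-free graphs with a two-cop ``guard-and-chase'' strategy. We may assume $G$ is connected. I would start from the classical theorem of Bacs\'o and Tuza that every connected $P_5$-free graph has either a dominating clique or a dominating induced $C_5$; call this dominating set $D$. The value of passing to $D$ is that at every moment the robber, wherever he sits, lies in or is adjacent to $D$, so a cop positioned on $D$ is never more than one ``$D$-hop'' from threatening him. The two cases $D=$ clique and $D=C_5$ would be treated separately, mirroring the split already used in the proof of the $2K_2$ result, whose $C_4$-free case likewise decomposes into a clique part and a $C_5$ part.

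Suppose first that $D$ is a dominating clique. One cop, $C_1$, is kept on $D$ throughout. Because $D$ is complete, $C_1$ can move between any two vertices of $D$ in a single step; consequently $C_1$ \emph{guards} $D$, in the sense that the robber can never step onto a vertex of $D$ without being captured on the following cop move, so the robber is confined to $V(G)\setminus D$. Moreover, since the robber's vertex $r$ is dominated by some $d_r\in D$, on its turn $C_1$ can move to $d_r$ and become adjacent to $r$; the robber is then forced to flee to a neighbour $r'$ non-adjacent to $d_r$, for otherwise $C_1$ captures. I would then use $P_5$-freeness to control consecutive escape vertices: the four vertices $d_r, r, r', d_{r'}$, together with the clique edge $d_r d_{r'}$, sharply restrict the induced subgraphs that can occur, and I would package this into a monovariant — a distance from the second cop $C_2$, or the size of the robber's reachable ``safe region'' — that strictly decreases while $C_1$ herds and $C_2$ closes in, forcing capture in finitely many moves.

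When $D$ is a dominating induced $C_5$, say $v_1v_2v_3v_4v_5$, I would have the two cops play the winning strategy for $C_5$ itself (recall $c(C_5)=2$) on the \emph{projections} of the robber onto $D$: at each step each cop occupies a cycle vertex and responds to the $D$-vertex currently dominating the robber as though the game were being played on the cycle. Domination guarantees such a projection always exists, and $P_5$-freeness is what should guarantee both that a cop move legal on $D$ is realizable in $G$ and that the robber cannot use a private neighbour of some $v_i$ to desynchronize his true position from his projection forever. Once the projected game is won — a cop sits on the $D$-vertex dominating the robber — the same one-hop threat as in the clique case forces the capture.

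The hard part, and the reason the statement remains only a conjecture, is the capture step inside the private neighbourhoods of $D$: domination pins down adjacency to $D$ but says nothing about the internal structure of $G-D$, so a robber can oscillate among private neighbours of different $D$-vertices while always staying non-adjacent to the cop's current $D$-vertex, and converting ``the cop is forever one hop behind'' into an actual decrease of a monovariant is the crux. I expect the clique case to be the more tractable one, since a single guarding cop jumps freely across $D$; the $C_5$ case looks genuinely harder, because the dominating set is not complete and a guarding cop cannot move between non-adjacent cycle vertices in one step, so the projection argument must be made robust against a robber who exploits exactly those missing edges. Making the $P_5$-free hypothesis carry this weight — presumably through a careful analysis of the few induced subgraphs spanned by the cops' and robber's current and previous positions — is where the main effort would lie.
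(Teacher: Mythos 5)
The statement you were asked about is a \emph{conjecture}: the paper gives no proof of it (it is quoted from \cite{VS}, and the paper only observes that it implies the $2K_2$-free conjecture, since $2K_2$-free graphs are $P_5$-free). So the only question is whether your proposal constitutes a proof, and it does not --- as you yourself concede in your final paragraph. The Bacs\'o--Tuza reduction to a dominating clique or a dominating induced $C_5$ is correctly invoked and is the natural starting point, and the guarding observation (a single cop stationed on a clique $D$ prevents the robber from ever entering $D$, since that cop is adjacent to every other vertex of $D$) is sound. But everything past that point is aspiration rather than argument.

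Two steps fail as stated. First, in the clique case, ``$C_1$ can always move to a dominator $d_r$ of the robber'' yields only a perpetual one-move-behind threat, and such threats do not by themselves force capture: plenty of graphs with small dominating sets defeat a cop who is forever adjacent after the fact (the $4$-cycle has a dominating edge and cop number $2$; the Petersen graph has dominating sets of size $3$ and cop number $3$), so domination alone cannot carry the weight. You never actually define the monovariant, you assign $C_2$ no concrete role, and the analysis of the induced subgraphs on $\{d_r, r, r', d_{r'}\}$ --- which is where the entire difficulty of the conjecture lives --- is left blank. Second, in the $C_5$ case, a projection (shadow) strategy needs more than domination: one needs the robber's projection onto the cycle to move at most one cycle-edge per robber move (as it would if the $C_5$ were a retract), and a dominating induced $C_5$ need not satisfy this. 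The robber can step from a vertex dominated only by $v_1$ to a neighbor dominated only by $v_3$, so his projection jumps distance two on the cycle and the simulated game on $C_5$ is no longer a legal game; you flag exactly this desynchronization but offer no mechanism to repair it. In short, this is a reasonable research plan whose crux --- converting the one-hop threat into capture using $P_5$-freeness --- is identified but not solved; the statement remains open both in your write-up and in the paper.
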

  
It is natural to ask about $mK_2$-free graphs, where $mK_2$ denotes the disjoint union of $m$ copies of $K_2$. It is easy to see that the cop number of an $mK_2$-free graph is at most $2m-1$ (by the same argument as in Proposition \ref{EASYBOUND}), but we believe it can be improved. We record it as a problem. 

\begin{problem}
What is the maximum cop number of $mK_2$-free graphs? 
\end{problem}


\begin{thebibliography}{99}
  \addcontentsline{toc}{chapter}{References}
\bibliographystyle{alpha}
\small 
\bibitem{AF} M. Aigner, M. Fromme, A game of cops and robbers, Discrete Appl. Math. \textbf{8} (1984) 1-11. 
\bibitem{BHPT} Z. Blbzsik, M. Hujter, A. Pluhir, Z. Tuza, Graphs with no induced $C_4$ and $2K_2$, Discrete Math. \textbf{115} (1993) 51-55.  


\bibitem{AB} A. Bonato, What is ... cop number?, Notices Amer. Math. Soc. \textbf{59}  no. 8, (2012), 1100-1101.
\bibitem{BN}  A. Bonato, R. J. Nowakowski, The game of cops and robbers on graphs, American Mathematical Society, 2011. 

\bibitem{AG} A. Gy\'{a}rf\'{a}s, Problems  from the world surrounding perfect graphs, Zastosowania Matematyki Applicationies Mathematicae XIX, 3-4 (1987), 413-441.
\bibitem{JKT} G.  Joret, M. Kaminski, D. O. Theis, The cops and robber game on graphs with forbidden (induced) subgraphs, Contributions to discrete mathematics  \textbf{5}, Number 2,  (2010) 40-51.
\bibitem{TK} T. Karthick, Note on equitable coloring of graphs, Australasian J. Comb. \textbf{59} (2) (2014), 251-259.
\bibitem{NW} R. Nowakowski, P. Winkler, Vertex-to-vertex pursuit in a graph, Discrete Math. \textbf{43} (1983), 235-239.
\bibitem{VS} V. Sivaraman, An application of the Gy\'{a}rf\'{a}s path argument, submitted. 
\bibitem{ZAW} Z. A. Wagner, Cops and robbers on diameter two graphs, Discrete Math. \textbf{338} (2015) 107-109.

\bibitem{SW} S. Wagon, A bound on the chromatic number of graphs without certain induced subgraphs, \textbf{29} 3 (1980), 345-346.
\end{thebibliography}
\end{document}